\title{A DNC function that computes no effectively bi-immune set}
\keywords{Computability Theory, Recursion Theory, Degree Theory, DNC, DNR}
\subjclass[2010]{}
\author{Achilles A. Beros}
\address{Laboratoire LINA UMR CNRS 6241\\
UFR de Sciences et Techniques\\
2 rue de la Houssini\`{e}re\\
BP 92208\\
44322 Nantes Cedex 03\\
France
}
\email{achilles.beros@univ-nantes.fr}
\newtheorem{Theorem}{Theorem}[section]
\newtheorem{Lemma}[Theorem]{Lemma}
\newtheorem{Corollary}[Theorem]{Corollary}
\theoremstyle{definition}
\newtheorem{Definition}[Theorem]{Definition}
\newcommand{\upto} {{\upharpoonright}}
\newcommand{\avoid} {\mbox{Avoid}}
\newcommand{\terminal} {\mbox{Terminal}}
\newcommand{\roots} {\mbox{Roots}}
\newcommand{\status} {\mbox{Status}}
\newcommand{\concat} {\hat{\ }}
\begin{document}

\begin{abstract}
In \emph{Diagonally Non-Computable Functions and Bi-Immunity} \cite{joLe}, Carl Jockusch and Andrew Lewis-Pye proved that every DNC function computes a bi-immune set.  They asked whether every DNC function computes an effectively bi-immune set.  We construct a DNC function that computes no effectively bi-immune set, thereby answering their question in the negative.
\end{abstract}

\maketitle

\section{Introduction}

In this paper, we take up the question of the relative computational strengths of diagonally non-computable (DNC) functions and effectively bi-immune sets.  Heuristically, a DNC function is a total function that diagonalizes against every partial computable function.  An immune set is one which, in some sense, avoids the infinite computably enumerable ($c.e.$) sets.  Of specific interest to our investigation are immunity, effective immunity, bi-immunity and effective bi-immunity.  Precise definitions of these terms are given in Definition \ref{immunity-def}.  In 1989, Jockusch showed that every DNC function computes an effectively immune set \cite{degFNFP}.  Recently, Jockusch and Lewis  proved that every DNC function computes a bi-immune set \cite{joLe}.\par

We shall show that this pattern does not persist when we examine effective bi-immunity.  This fact is the substance of our main theorem, which we state without further ado.

\begin{Theorem}\label{mainResult}
There is a diagonally non-computable function that computes no effectively bi-immune set.
\end{Theorem}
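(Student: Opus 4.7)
The plan is to construct $f$ as an infinite path through the tree $T \subseteq \omega^{<\omega}$ of DNC strings (those $\tau$ with $\tau(n) \neq \varphi_n(n)$ whenever $\varphi_n(n) \downarrow$, for all $n < |\tau|$), so that the DNC condition is automatic. The remaining task is to meet, for every pair $(e,i) \in \omega \times \omega$, the requirement
\[
R_{e,i}: \text{if } A := \Phi_e^f \text{ is the characteristic function of a set, then } \varphi_i \text{ is not an effective bi-immunity witness for } A.
\]

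For each $R_{e,i}$ I fix in advance, by an effective application of the recursion theorem, a c.e.\ index $j = j(e,i)$ for a set $W_j$ whose enumeration is driven by the strategy about to be described. Once $\varphi_i(j)$ converges, set $N = \varphi_i(j) + 1$. The strategy, given the current stem $\sigma \in T$, searches computably for an extension $\tau \succeq \sigma$ in $T$ together with a set $F \subseteq \omega$ of cardinality $N$ on which $\Phi_e^\tau$ is defined and constant, with common value $v \in \{0,1\}$. If such $(\tau, F, v)$ is found, the strategy enumerates $F$ into $W_j$ and commits $\tau$ as the new stem; then $W_j$ is an $N$-element c.e.\ subset of $A$ (if $v=1$) or of $\overline{A}$ (if $v=0$), so $|W_j| > \varphi_i(j)$ refutes $\varphi_i$ as an effective bi-immunity witness. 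If $\varphi_i(j) \uparrow$, the requirement is trivial. If the search never succeeds above $\sigma$, then any $f \in [T]$ extending $\sigma$ with $N$ inputs on which $\Phi_e^f = v$ would, through a long-enough initial segment, exhibit a successful $\tau$, a contradiction; hence $|\{x : \Phi_e^f(x) = 0\}|$ and $|\{x : \Phi_e^f(x) = 1\}|$ are each strictly less than $N$, so $\Phi_e^f$ is not total, and again $R_{e,i}$ is satisfied.

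The strategies are interleaved in standard priority order, with the stem extended by a single DNC value between successful actions; this is possible because every node of $T$ has infinitely many immediate extensions in $T$. The resulting $f$ is a DNC function meeting every $R_{e,i}$, hence computes no effectively bi-immune set. I anticipate the main obstacle to be a careful justification of the complementary-case analysis—in particular, verifying that the absence of a monochromatic $N$-set above the current stem really does force $\Phi_e^f$ to be non-total on every $T$-extension—together with the delicate recursion-theorem setup required to obtain the indices $j(e,i)$ uniformly in $(e,i)$ while allowing $\varphi_i(j)$ to determine the target cardinality $N$.
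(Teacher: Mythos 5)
Your construction has a genuine gap at its foundation. You set out to build a single path $f$ through the tree $T$ of DNC strings so that the DNC property ``comes for free,'' but $T$ is only $\Pi^0_1$ (co-c.e.), not computable: a string is expelled from $T$ whenever some $\varphi_n(n)$ is later seen to converge to $\tau(n)$. Consequently the step ``search computably for an extension $\tau \succeq \sigma$ in $T$'' is not a computable search. At best you can search in the stage-$s$ approximation $T_s \supseteq T$, but then the $\tau$ you commit to, together with the set $F$ you irrevocably dump into $W_j$, may later fall out of $T$. At that point $\Phi_e^f$ on the new path $f$ need not agree with $\Phi_e^\tau$ on $F$ (the use may exceed the point of divergence), so $W_j$ is neither contained in $A$ nor in $\overline{A}$, and the index $j$ is spent. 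Picking a fresh $j'$ via the recursion theorem and restarting is the natural repair, but each restart waits for $\varphi_i(j')$, produces a potentially larger target $N'$, and commits a potentially longer $\tau'$ exposing new positions at which the stem can again be expelled from $T$; nothing in your setup bounds the number of such restarts for a single requirement, so the intended finite-injury bookkeeping does not obviously close.

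The paper avoids this obstacle by inverting the logic: it does \emph{not} work inside the DNC tree at all. It constructs a full-branching ($y$-branching with $y\ge 2$) $\Sigma^0_1$ tree $T'$, in which membership is witnessed positively, and arranges that \emph{every} branch of $T'$ computes no effectively bi-immune set; only at the very end does it observe that a $2$-branching tree automatically contains a DNC path (at each level at most one value is forbidden). The price of this decoupling is that a requirement must be met uniformly on all branches below a root, which is why the paper needs the more elaborate machinery: the family $\mathfrak W(\sigma)$ of $h(a)+1$ disjoint sets of size $m(\sigma)$, the notion of a string being $n$-bad, and Lemma \ref{lemma}, which guarantees that either some full-branching subtree accepts a selection of $\mathfrak W(\sigma)$ (yielding a small c.e.\ subset of $W_e^f$ for every extending branch) or almost all immediate extensions fail to be bad (so for every extending branch some $W_{b_j}$ is preserved, yielding a large c.e.\ subset of $\overline{W}_e^f$). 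Your single-path pigeonhole (``no monochromatic $N$-set forces $\Phi_e^f$ non-total'') is sound in isolation but does not generalize to the all-branches setting, and your single-path setting in turn is exactly what runs into the $\Pi^0_1$-tree problem; the two difficulties are two sides of the same coin, and the paper's full-branching tree plus Lemma \ref{lemma} is the mechanism that resolves both at once.
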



\section{Preliminaries}\label{preliminaries}

We fix enumerations, $\{ W_e \}_{e \in \mathbb N}$ and $\{ \phi_e \}_{e \in \mathbb N}$, of the computably enumerable sets and the partial computable functions, respectively.  Typically, $e$ will be used to denote a code for either a program or an oracle program, depending on the context.  Lower case Greek letters will denote strings of natural numbers.  We regard a string as a function with domain an initial segment of $\mathbb N$.  As such, we apply standard function notation when manipulating strings.  We denote by $\sigma \prec \tau$ that $\sigma \neq \tau$ and $\tau \upto |\sigma| = \sigma$.  The expression $\sigma\concat\tau$ indicates the extension of $\sigma$ by $\tau$.  In general, we use $T$ to denote an infinite tree and $F$ to denote a finite tree.  The exception to this is during the construction when we need to make distinguished reference to certain particular finite trees, $T_{s}$, which are approximations of the infinite tree $T$ we are constructing.  Given $T$, a tree, $[T]$ denotes the set of infinite branches through $T$.  All trees under consideration are finite branching.  We shall use the notation $\overline{A}$ to indicate the complement of a set $A$.  


\begin{Definition}
Let $e,s \in \mathbb N$ and $\sigma$ be a string.  We define $W^{\sigma}_{e,s}$ to be the set of natural numbers, $x$, such that, using an oracle which extends $\sigma$ and with oracle usage less than or equal to $|\sigma|$, the computation of the oracle program $e$ terminates on input $x$ within $s$ computation stages.  We define $W^{\sigma}_e = \bigcup_{s\in \mathbb N} W^{\sigma}_{e,s}$.
\end{Definition}

Observe that $W^{\sigma}_{e,s}$ is a finite set and $\overline{W}^{\sigma}_{e,s}$ is computable.\par

We now present formal definitions of immunity and diagonal non-computability.

\begin{Definition}[\cite{soare}]\label{immunity-def}  Let $A$ be an infinite set.
\begin{enumerate}
\item $A$ is \emph{immune} if $A$ contains no infinite $c.e.$ set.

\item $A$ is \emph{bi-immune} if both $A$ and $\overline{A}$ are immune.

\item $A$ is \emph{effectively immune} if there is a computable function, $f$, such that $W_e \subseteq A \rightarrow |W_e|<f(e)$.

\item $A$ is \emph{effectively bi-immune} if both $A$ and $\overline{A}$ are effectively immune.
\end{enumerate}
\end{Definition}

Suppose the computable functions $f$ and $g$ witness effective immunity for a set $A$ and its complement, respectively. Then the function $h(x) = \max\{f(x),g(x)\}$ is computable and witnesses effective immunity for both $A$ and $\overline{A}$.

\begin{Definition}
A function, $f$, is said to be \emph{diagonally non-computable} if $f(e) \neq \phi_e(e)$ whenever $\phi_e(e)$ converges.
\end{Definition}


\section{Technical Definitions and a Lemma}

The following technical definitions, notation and lemma are needed for the proof of Theorem \ref{mainResult}.

\begin{Definition}  We establish the following notation.
\begin{enumerate}
\item Given a string $\sigma$ with $|\sigma|>0$, let $\sigma^- = \sigma \upto (|\sigma|-1)$.

\item We fix a one-to-one computable enumeration, $\mathfrak F = \{F_k\}_{k \in \mathbb N}$, of all finite trees contained in $\mathbb N^{<\omega}$.  We say that $F_i <_{\mathfrak F} F_j$ if $i < j$.

\item Given strings $\gamma, \sigma$ and $\tau$ we say that $\sigma$ and $\tau$ are \emph{siblings below $\gamma$} if $\sigma \upto |\gamma| = \tau \upto |\gamma| = \gamma$, $|\sigma| = |\tau|$ and $\sigma(|\gamma|) \neq \tau(|\gamma|)$.  If $\sigma^- = \tau^- = \gamma$, then we say that $\sigma$ and $\tau$ are \emph{siblings immediately below $\gamma$}.


\item A node of a tree is a \emph{branching node} if there are at least two members of the tree that are siblings below that node.  If every node at level $\ell$ is a branching node, then level $\ell$ is a \emph{branching level}.

\item We call a tree \emph{full-branching} if every level of the tree that contains a non-terminal node is a branching level.  A finite tree is \emph{full-branching below $\sigma$ of depth $k$} if every level containing a non-terminal node after the $|\sigma|^{th}$ level is a branching level, every node is a prefix or extension of $\sigma$ and every terminal node has length $|\sigma|+k$.  We will call a tree \emph{full-$y$-branching} if it is full-branching and $y$-branching at every node.





\item Given a finite collection of sets, $\mathfrak W$, we call $S$ a \emph{selection of $\mathfrak W$} if $S$ contains exactly one element from each member of $\mathfrak W$.

\end{enumerate}
\end{Definition}

The next two definitions characterize certain behavior of finite initial segments of oracles.

\begin{Definition}  We define the terms \emph{accept} and \emph{preserve} for trees and strings.
\begin{enumerate}
\item Let $\sigma$ be a string, $e\in\mathbb N$ and $S$ be a finite subset of $\mathbb N$.  If $S \subseteq W_e^{\sigma}$, we say that $\sigma$ \emph{accepts $S$ for $e$}.  If $S \subseteq \overline{W}_e^{\sigma}$, we say that $\sigma$ \emph{preserves $S$ for $e$}.

\item Given a finite tree, $F$, we say that \emph{$F$ accepts (preserves) $S$ for $e$} if every terminal node of $F$ accepts (preserves) $S$ for $e$.
\end{enumerate}
\end{Definition}

It is important to note that if a finite tree, $F$, accepts a finite set, $S$, then every tree containing $F$, each of whose nodes is a prefix or suffix of some node in $F$, also accepts $S$.

\begin{Definition}\label{def-bad}
Let $\mathfrak W$ be a finite collection of finite sets, $\sigma$ a string and $e,n,m \in \mathbb N$.  We define $\sigma$ to be \emph{$n$-bad relative to $\langle \mathfrak W, m, e \rangle$} inductively.

\begin{enumerate}
\item $\sigma$ is \emph{1-bad relative to $\langle \mathfrak W, m, e \rangle$} if $\sigma$ accepts a selection of $\mathfrak W$ for $e$.

\item $\sigma$ is \emph{$(n+1)$-bad relative to $\langle \mathfrak W, m, e \rangle$} if $\sigma$ has $m$ extensions of length $|\sigma|+1$ that are $n$-bad relative to $\langle \mathfrak W, m, e \rangle$.
\end{enumerate}
\end{Definition}

When it is clear from context, we omit the statement of relativization and merely say that a string is $n$-bad.  Also, when computational bounds are required, we say that a string is $n$-bad within $s$ computation stages.  Observe that $\ell$-bad implies $n$-bad for $\ell \leq n$.\par

Figure 1, below, illustrates the relationship between different levels of badness.  The empty circles indicate nodes that are 1-bad.

\[\begin{xy}
(0,0)*+{}*\cir<1pt>{}*\frm{*}="a";
(5,0)*+{\mbox{\tiny{4-bad}}}="root";
(-36,-16)*+{}*\cir<1pt>{}*\frm{*}="b0";
(-31,-16)*+{\mbox{\tiny{3-bad}}}="3b0";
(0,-16)*+{}*\cir<1pt>{}*\frm{*}="b1";
(5,-16)*+{\mbox{\tiny{2-bad}}}="2b1";
(36,-16)*+{}*\cir<1pt>{}*\frm{}="b2";
(41,-16)*+{\mbox{\tiny{1-bad}}}="1b2";
(-48,-24)*+{}*\cir<1pt>{}*\frm{*}="c0";
(-43,-24)*+{\mbox{\tiny{2-bad}}}="2c0";
(-36,-24)*+{}*\cir<1pt>{}*\frm{*}="c1";
(-31,-24)*+{\mbox{\tiny{2-bad}}}="2c1";
(-24,-24)*+{}*\cir<1pt>{}*\frm{}="c2";
(-19,-24)*+{\mbox{\tiny{1-bad}}}="1c2";
(-12,-24)*+{}*\cir<1pt>{}*\frm{}="c3";
(-7,-24)*+{\mbox{\tiny{1-bad}}}="1c3";
(0,-24)*+{}*\cir<1pt>{}*\frm{}="c4";
(5,-24)*+{\mbox{\tiny{1-bad}}}="1c4";
(12,-24)*+{}*\cir<1pt>{}*\frm{}="c5";
(17,-24)*+{\mbox{\tiny{1-bad}}}="1c5";
(24,-24)*+{}*\cir<1pt>{}*\frm{}="c6";
(36,-24)*+{}*\cir<1pt>{}*\frm{}="c7";
(48,-24)*+{}*\cir<1pt>{}*\frm{}="c8";
(-52,-28)*+{}*\cir<1pt>{}*\frm{}="d0";
(-48,-28)*+{}*\cir<1pt>{}*\frm{}="d1";
(-44,-28)*+{}*\cir<1pt>{}*\frm{}="d2";
(-40,-28)*+{}*\cir<1pt>{}*\frm{}="d3";
(-36,-28)*+{}*\cir<1pt>{}*\frm{}="d4";
(-32,-28)*+{}*\cir<1pt>{}*\frm{}="d5";
(-28,-28)*+{}*\cir<1pt>{}*\frm{}="d6";
(-24,-28)*+{}*\cir<1pt>{}*\frm{}="d7";
(-20,-28)*+{}*\cir<1pt>{}*\frm{}="d8";
(-16,-28)*+{}*\cir<1pt>{}*\frm{}="d9";
(-12,-28)*+{}*\cir<1pt>{}*\frm{}="d10";
(-8,-28)*+{}*\cir<1pt>{}*\frm{}="d11";
(-4,-28)*+{}*\cir<1pt>{}*\frm{}="d12";
(0,-28)*+{}*\cir<1pt>{}*\frm{}="d13";
(4,-28)*+{}*\cir<1pt>{}*\frm{}="d14";
(8,-28)*+{}*\cir<1pt>{}*\frm{}="d15";
(12,-28)*+{}*\cir<1pt>{}*\frm{}="d16";
(16,-28)*+{}*\cir<1pt>{}*\frm{}="d17";
(20,-28)*+{}*\cir<1pt>{}*\frm{}="d18";
(24,-28)*+{}*\cir<1pt>{}*\frm{}="d19";
(28,-28)*+{}*\cir<1pt>{}*\frm{}="d20";
(32,-28)*+{}*\cir<1pt>{}*\frm{}="d21";
(36,-28)*+{}*\cir<1pt>{}*\frm{}="d22";
(40,-28)*+{}*\cir<1pt>{}*\frm{}="d23";
(44,-28)*+{}*\cir<1pt>{}*\frm{}="d24";
(48,-28)*+{}*\cir<1pt>{}*\frm{}="d25";
(52,-28)*+{}*\cir<1pt>{}*\frm{}="d26";
(0,-33)*+{\mbox{\small{Figure 1:  Relative badness for $m=3$}}}="caption";
"a";"b0" **\dir{-},
"a";"b1" **\dir{-},
"a";"b2" **\dir{.},
"b0";"c0" **\dir{-},
"b0";"c1" **\dir{-},
"b0";"c2" **\dir{.},
"b1";"c3" **\dir{.},
"b1";"c4" **\dir{.},
"b1";"c5" **\dir{.},
"b2";"c6" **\dir{.},
"b2";"c7" **\dir{.},
"b2";"c8" **\dir{.},
"c0";"d0" **\dir{.},
"c0";"d1" **\dir{.},
"c0";"d2" **\dir{.},
"c1";"d3" **\dir{.},
"c1";"d4" **\dir{.},
"c1";"d5" **\dir{.},
"c2";"d6" **\dir{.},
"c2";"d7" **\dir{.},
"c2";"d8" **\dir{.},
"c3";"d9" **\dir{.},
"c3";"d10" **\dir{.},
"c3";"d11" **\dir{.},
"c4";"d12" **\dir{.},
"c4";"d13" **\dir{.},
"c4";"d14" **\dir{.},
"c5";"d15" **\dir{.},
"c5";"d16" **\dir{.},
"c5";"d17" **\dir{.},
"c6";"d18" **\dir{.},
"c6";"d19" **\dir{.},
"c6";"d20" **\dir{.},
"c7";"d21" **\dir{.},
"c7";"d22" **\dir{.},
"c7";"d23" **\dir{.},
"c8";"d24" **\dir{.},
"c8";"d25" **\dir{.},
"c8";"d26" **\dir{.},
\end{xy}\]

\smallskip


\begin{Lemma}\label{lemma}
Fix $\mathfrak W = \{ A_0, \ldots A_p \}$ a finite collection of finite sets, $e,k,y\in \mathbb N$ and $\sigma$, a string.  One of the following is true:

\begin{enumerate}
\item \label{disjunct1} There is a selection of $\mathfrak W$, $S$, and a finite tree, $F$, such that $F$ is $y$-branching below $\sigma$, every terminal node has length $|\sigma|+k$, and $F$ accepts~$S$.

\item \label{disjunct2} There are fewer than $m = (y-1)(\prod_{j\leq p}|A_j|)+1$ siblings immediately below $\sigma$ that are $i$-bad relative to $\langle \mathfrak W, m, e \rangle$ for some $i \leq k$.
\end{enumerate}

\end{Lemma}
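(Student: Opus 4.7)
The plan is to induct on $k$, with a pigeonhole on the selections of $\mathfrak W$ as the engine at both the base case and the inductive step.

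First I would dispose of the quantifier in disjunct (\ref{disjunct2}): since $\ell$-bad implies $n$-bad whenever $\ell \le n$, the clause ``$i$-bad for some $i \le k$'' collapses to ``$k$-bad''. I will work with $k$-badness throughout, which keeps the bookkeeping clean in the induction.

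For the base case $k=1$, I would assume the negation of disjunct (\ref{disjunct2}): at least $m = (y-1)\prod_{j\le p}|A_j| + 1$ siblings immediately below $\sigma$ are $1$-bad, and thus each accepts some selection of $\mathfrak W$ for $e$. Since there are only $\prod_{j\le p}|A_j|$ selections, pigeonhole (the constant $m$ is tuned precisely for this) yields $y$ siblings all accepting a common selection $S$. These $y$ children, together with $\sigma$, assemble into a tree of depth $1$ that is $y$-branching at $\sigma$ with every terminal node accepting $S$, witnessing disjunct (\ref{disjunct1}).

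For the inductive step, assume the lemma at depth $k$ and suppose at least $m$ siblings immediately below $\sigma$ are $(k+1)$-bad. Unpacking Definition \ref{def-bad}, each such sibling $\tau$ has at least $m$ extensions of length $|\tau|+1$ that are $k$-bad; this is precisely the negation of disjunct (\ref{disjunct2}) for $\tau$ at depth $k$. The inductive hypothesis then supplies, for each such $\tau$, a selection $S_\tau$ and a tree $F_\tau$ that is $y$-branching below $\tau$ with all terminal nodes of length $|\sigma|+(k+1)$ accepting $S_\tau$. A second pigeonhole on the $\geq m$ selections $S_\tau$ produces $y$ siblings $\tau_1,\dots,\tau_y$ whose $S_{\tau_i}$ all coincide with a common $S$. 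Grafting $F_{\tau_1},\dots,F_{\tau_y}$ onto $\sigma$ yields a tree that is $y$-branching at $\sigma$ (by construction) and at every subsequent required level (by each inductive witness), with all terminals accepting $S$.

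The only subtlety I expect is verifying cleanly that the definition of $(k+1)$-bad unpacks into exactly the failure of disjunct (\ref{disjunct2}) at depth $k$ for each $(k+1)$-bad child, so that the inductive hypothesis applies. Once that alignment is in place, the rest of the argument is driven by the fact that the constant $m=(y-1)\prod_j|A_j|+1$ is chosen precisely to extract $y$ agreeing witnesses from $m$ candidates distributed over $\prod_j|A_j|$ possible selections.
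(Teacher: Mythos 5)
Your proof is correct and follows essentially the same strategy as the paper's: a pigeonhole argument on the finitely many selections of $\mathfrak W$, driven by an induction that unwinds the recursive definition of badness, together with the observation that an accepting node remains accepting under extension. The only differences are organizational: you induct directly on the depth parameter $k$ and treat general $y$ from the outset (and make explicit the collapse of ``$i$-bad for some $i\leq k$'' to ``$k$-bad''), whereas the paper inducts on the degree of badness, proves the case $y=2$ first, and then remarks that the construction scales to arbitrary $y$; your bookkeeping of the tree depth is arguably the cleaner of the two.
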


\begin{proof}
We first prove the claim for $y=2$.  Suppose \ref{disjunct2} above is false.  We prove that \ref{disjunct1} is true by induction on the degree of badness of the siblings immediately below $\sigma$.  Let $\tau_1,\ldots,\tau_m$ be siblings immediately below $\sigma$ that are $1$-bad relative to $\langle \mathfrak W, m, e \rangle$.  For $1\leq j\leq m$, we can choose a full-branching tree below $\tau_j$ of depth $k-1$ which accepts a selection of $\mathfrak W$.  Since $\mathfrak W$ has fewer than $m$ distinct selections, by the pigeon-hole principle, two of the trees must accept the same selection.  The union of these two trees is the desired tree, $F$.\par

We now assume that \ref{disjunct1} holds whenever there are $m$ siblings immediately below $\sigma$ that are $n$-bad.  Suppose there are $m$ siblings immediately below $\sigma$ that are $(n+1)$-bad relative to $\langle \mathfrak W, m, e \rangle$; each has $m$ siblings below it that are $n$-bad relative to $\langle \mathfrak W, m, e \rangle$.  By the induction hypothesis, each of the $(n+1)$-bad nodes is extended by a full-branching tree that accepts some selection of $\mathfrak W$.  Since $m$ $(n+1)$-bad nodes extend $\sigma$, the induction step follows by again applying the pigeon-hole principle.\par

To prove the claim for $y>2$, observe that everywhere a pair of nodes was selected, $y$ nodes can be selected, and everywhere a full-branching tree was chosen, a full-$y$-branching tree can be chosen.
\end{proof}

We present one final definition before turning to the proof of the main result.

\begin{Definition}
Given an effective construction, we call a natural number a \emph{diagonalization point} of the construction if it is a code for a c.e.~set whose enumeration can be controlled during the construction.
\end{Definition}

The existence of diagonalization points is guaranteed by the Recursion Theorem.


\section{Proof of the Theorem}\label{proof}

With Lemma \ref{lemma} in hand, we proceed to prove the theorem.

\begin{proof}[Proof of Theorem \ref{mainResult}]

The proof uses the priority method.  We shall construct an infinite full-branching tree $T$ such that no member of $[T]$ computes an effectively bi-immune set.  The fact that $T$ will be full branching will guarantee that some member of $[T]$ will be a DNC function, which will prove the claim.

Given a partial computable function $h$ and an oracle program $e$, we consider the requirements $R_{\langle h, e \rangle}$:
$$(\exists x)\Big( h(x)\uparrow \Big) \vee (\forall f \in [T])(\exists a)\Big( h(a) < |W_a| \wedge (W_a \subseteq W_e^f \vee W_a \subseteq \overline{W}_e^f) \Big).$$


In order to describe the strategy for satisfying $R_{\langle h, e \rangle}$ we must define a number of functions and sets that will be used in the strategy and updated throughout the construction.


\begin{itemize}
\item Let $\terminal$ be the set of nodes that will never be extended.

\item For each requirement, $R_{\langle h, e \rangle}$, a collection of strings, $\roots(\langle h, e \rangle)$, will be chosen.  We will ensure that every element of $T$ is either a prefix or an extension of some member of $\roots(\langle h, e \rangle)$, or is a member of $\terminal$.  The elements of $\roots(\langle h, e \rangle)$ are called \emph{roots of $R_{\langle h, e \rangle}$}.  The requirement $R_{\langle h, e \rangle}$ will be satisifed for the entire sub-tree below a root at once.  The requirement will have an outcome for each root.

\item Given a root, $\sigma$, when certain conditions are met we will assign a collection of sets, $\mathfrak W(\sigma)$, to that root.  Exactly how the members of $\mathfrak W(\sigma)$ are chosen is described in step \ref{pickSets} of the strategy.  Depending on the outcome of the requirement for $\sigma$, this collection of sets may include the witness needed to diagonalize against $h$.

\item We will use the values of a function, $\status$, to indicate which roots require protection from injury by lower priority requirements.  In particular, given a root, $\sigma$, $\status(\sigma)=0$ if a witness has not been found in $\mathfrak W(\sigma)$ and protection is required.  If $\status(\sigma) = 1$, then a witness has been found and protection is not required.

\item For each root, $\sigma$, a tree, $F(\sigma)$, which accepts a selection of~$\mathfrak W(\sigma)$, may be chosen.  After $F(\sigma)$ is chosen, $\status(\sigma)$ is set to 1.

\item If $\sigma$ is a root of some requirement and $\status(\sigma)=0$, then we denote by $\avoid(\sigma)$ the set 
$$\{ \beta : ( \exists k, \alpha )( \beta \succeq \alpha \succ \sigma \wedge \alpha \mbox{ is $k$-bad relative to $\langle \mathfrak W(\sigma), m^*(\sigma), e \rangle$} \},$$
\noindent where $m^*(\sigma)$ is defined recursively based on the outcomes of roots that are prefixes of $\sigma$.  The precise definition of $m^*(\sigma)$ will be given in step \ref{pickSets} of the strategy.  If $\status(\sigma) = 1$, then $\avoid(\sigma) = \emptyset$.  $\avoid$ is the restraint function which protects computations by higher priority requirements from the actions of lower priority requirements.  Since $\avoid$ is not computable, at any given stage of the construction we must use a computationally bounded approximation.
\end{itemize}

\textbf{Strategy:}  The strategy is identical for each root, so we assume a fixed $\sigma \in \roots(\langle h, e \rangle)$ and describe the strategy for satisfying $R_{\langle h, e \rangle}$ below $\sigma$.


\begin{enumerate}[1.]

\item \label{notTotal1} First, we select a diagonalization point, $a$, and wait for $h(a)$ to converge. 

\item \label{notTotal2} If $h(a)$ converges, we select further diagonalization points, $b_{0}, \ldots , b_{h(a)}$, and wait for $h$ to converge on the new diagonalization points.  

\item \label{pickSets} If $h$ converges on the diagonalization points $b_{0}, \ldots , b_{h(a)}$, we define $m(\sigma) = \max \{ h(b_{j}) : j \leq h(a) \}$ and
$$m^*(\sigma) = \Big( 1 + \sum_{\tau \prec \sigma} m^*(\tau) \Big)\Big( m(\sigma)^{h(a)+1} \Big)+1,$$
where $\sum_{\tau \prec \sigma} m^*(\tau) = 1$ when $\sigma$ is the empty string.  We let $y = 2+\sum_{\tau \prec \sigma} m^*(\tau)$.  We choose sets $Y_0, \ldots, Y_{h(a)}$, disjoint subsets of $\overline W^{\sigma}_{e}$ of size $m(\sigma)$, and set $W_{b_j} = Y_j$ for $j \leq h(a)$.  We complete step 3 by defining $\mathfrak W(\sigma) = \{ W_{b_0}, \ldots , W_{b_{h(a)}} \}$.

\item \label{stat0} Next, we search for $m^*(\sigma)$ siblings immediately below $\sigma$ that are $k$-bad relative to $\langle \mathfrak W(\sigma), m^*(\sigma), e \rangle$ for some $k \in \mathbb N$.  While searching, $\status(\sigma) = 0$.

\item \label{endOfS} If the search in step \ref{stat0} terminates, by Lemma \ref{lemma} we can choose a finite tree, $F(\sigma)$, of depth $k$ which is $y$-branching below $\sigma$ and accepts some selection of $\mathfrak W(\sigma)$.  We enumerate into $W_{a}$ the selection of $\mathfrak W(\sigma)$ that $F(\sigma)$ accepts, set $\status(\sigma) = 1$ and add to $T$ all nodes of $F(\sigma)$.  For each $\alpha \prec \sigma$ such that $\status(\alpha) = 0$, we add to the set $\terminal$ those nodes of $F(\sigma)$ in $\avoid(\alpha)$.

\end{enumerate}

If the strategy remains at steps \ref{notTotal1}, \ref{notTotal2} or \ref{stat0} cofinitely, we denote the outcome $\infty$.  If the strategy completes step \ref{endOfS} and selects a tree $F(\sigma)=F_{i} \in \mathfrak{F}$ we denote the outcome $w_{i}$.\par

\textbf{Tree of Strategies:}  We define $\Lambda = \{ \infty \} \cup \{ w_{i} : i \in \mathbb N \}$ and order $\Lambda$ so that $\infty$ is the $<_{\Lambda}$-least outcome and $w_i <_{\Lambda} w_j$ if and only if $F_i <_{\mathfrak F} F_j$ (i.e., $i<j$).  We choose $\{f_i\}_{i \in \mathbb N}$, an effective enumeration of the functions with domain contained in $\mathbb N^{<\omega}$ and range in $\Lambda$.  We denote by $\Delta$ the set of all such functions $\{ f_i : i \in \mathbb N\}$.  We say that $f_i<_{\Delta} f_j$ if and only if $\mbox{dom}(f_i) = \mbox{dom}(f_j)$ and, for all $\sigma \in \mbox{dom}(f_i)$, $f_i(\sigma) <_{\Lambda} f_j(\sigma)$.  We construct the tree of strategies, $\mathfrak T$, inductively as a subset of $\Delta^{<\omega}$ so that $\alpha \in \mathfrak T$ is extended by all possible results of the strategy $\alpha$. \par

\textbf{Construction:}  We proceed in stages, constructing a finite tree, $T_s$, at each stage $s$.  Consider an arbitrary stage $s$.  We divide stage $s$ into $s$ sub-stages and at each sub-stage $t$, $0 \leq t < s$, we consider the $t^{th}$ requirement, $R_{\langle h_t,e_t \rangle}$.  For every terminal node, $\sigma\in T_{s}$, not in Terminal, we choose two siblings immediately below $\sigma$, which are not in $\avoid(\tau)$ for any $\tau\prec\sigma$, and add them to $T_{s}$.  This collection of new terminal nodes is the set $\roots(\langle h_t,e_t \rangle)$.

Consider $f\in \Delta$.  If the strategy $\alpha \in \mathfrak T$ acted at sub-stage $t-1$, $\mbox{dom}(f) = \roots(\langle h_t,e_t \rangle)$ and $f$ reflects a current correct outcome of each member of $\roots(\langle h_t,e_t \rangle)$, then $\alpha\concat f$ is eligible to act at sub-stage $t$.  From those $f$, such that $\alpha \concat f$ is eligible to act, we pick the $<_{\Lambda}$-least $f$.  We now act according to the \ref{endOfS} step strategy described above for each root in $\roots(\langle h_t,e_t \rangle)$ using the approximations of $\avoid$ and $\terminal$ computed up to $s$ stages.\par

We define $T=\bigcup_{s\in \mathbb N} T_s$.

\textbf{Verification:}  We shall refer to the outcome of a requirement at one of its roots as the \emph{outcome of the root}.  Let $f$ denote the $\liminf$ of the current strategy.  We prove inductively that for every requirement the set of roots eventually stabilizes.

The claim is obvious for the first requirement as the only root is the empty string and there are no higher priority requirements.  The first requirement is the base case of our induction.\par

Now suppose that $\tau$ is a root and the true outcome of $\tau$ is $\infty$.  From Definition \ref{def-bad} it follows that, for $\alpha \succ \tau$ not in $\avoid(\tau)$, there are fewer than $m^*(\tau)$ siblings immediately below $\alpha$ in $\avoid(\tau)$.  In other words, every root with true outcome $\infty$ can exclude, at every level, no more than $m^*(\tau)$ strings that are not already in a cone to be avoided.  Now consider a permanent root, $\sigma$, that reaches step \ref{endOfS} of the strategy and has outcome $w_i$, with $F(\sigma) = F_i$.  Even if every root $\tau \prec \sigma$ has true outcome $\infty$ and selects the maximum number, $m^*(\tau)$, of cones to be avoided at every level, $F_i \setminus \bigcup_{\tau \prec \sigma} \avoid(\tau)$ will still contain a full-branching tree below $\sigma$ since $F_i$ is $y$-branching for $y = 2+\sum_{\tau \prec \sigma} m^*(\tau)$.

Taken together with the base case, these two observations show that the roots of a requirement will change at most a finite number of times.\par

All that remains is to show that every requirement is satisfied.  We fix a requirement, $R_{\langle h,e \rangle}$.  Since the roots of $R_{\langle h,e \rangle}$ will change at most a finite number of times, there is a stage, $s_0$, after which $\roots(\langle h,e \rangle)$ has stabilized to a set of strings such that every member of the tree not in $\terminal$ is either a prefix or suffix of some member of the set.  Hence, every infinite branch of the tree extends a root of every requirement.  We fix $\sigma \in \roots(\langle h,e \rangle)$, a permanent root, and a stage $s_1 > s_0$ such that the outcome of $\sigma$ never changes after stage $s_1$.  Either $h$ is not total, or, at stage $s_1$, a collection of finite sets $\mathfrak W(\sigma) = \{ W_{b_{0}}, \ldots , W_{b_{h(a)}} \}$ is chosen.  If $h$ is not total, then the requirement is satisfied trivially.  Assume instead that a collection $\mathfrak W(\sigma)$ has been chosen.

\textit{Case 1:}  If the true outcome of $\sigma$ is $w_i$ for some $i\in \mathbb N$, then, following the notation introduced in the strategy, at stage $s_1$ the contents of $W_{a}$ are chosen.  A selection of $\mathfrak W(\sigma)$ is accepted by $F(\sigma) = F_i$, and the members of that selection are exactly the members of $W_{a}$.  Since the members of $\mathfrak W(\sigma)$ are disjoint, $|W_{a}| = |\mathfrak W(\sigma)| = h(a)+1$.  Thus, $h$ cannot witness effective immunity of the set computed by oracle program $e$ using any oracle that is a branch of $T$ extending $\sigma$.\par

\textit{Case 2:}  If the true outcome of $\sigma$ is $\infty$, then the requirement will ensure that no selection of $\mathfrak W(\sigma)$ is accepted by any branch passing through $\sigma$ because the branches that do accept selections of $\mathfrak W(\sigma)$ will be relegated to $\avoid(\sigma)$.  Consequently, each branch must preserve some member of $\mathfrak W(\sigma)$.  In other words, $h$ cannot witness the effective immunity of the complement of the set computed by oracle program $e$ using any oracle that is a branch of $T$ extending $\sigma$.\par

Since every requirement is satisfied for each of its roots, every branch in $T$ fails to compute an effectively bi-immune set.  The tree has two siblings immediately below every node and so some branch is a DNC function, proving the theorem.
\end{proof}


\section{Conclusion}

In the proof of Theorem \ref{mainResult}, we constructed a $\Sigma_1^0$, fully-branching, infinite tree, every branch of which computes no effectively bi-immune set.  With only a modest increase in complexity, we can produce an analogous tree every branch of which is a DNC function.\par

\begin{Corollary}\label{delta2}
There is a $\Delta_2^0$ tree every branch of which is a DNC function that computes no effectively bi-immune set.
\end{Corollary}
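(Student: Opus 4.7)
The plan is to rerun the construction of Theorem~\ref{mainResult} with one additional constraint: at every branching point --- both the two-sibling extensions that create new roots and the $y$-branching internal nodes of each $F(\sigma)$ --- every immediate successor $\rho\concat v$ of a node $\rho$ must satisfy $v \neq \phi_{|\rho|}(|\rho|)$ whenever the latter converges. Because every value at every coordinate of every branch thereby avoids the corresponding diagonal value, every branch of the resulting tree is automatically a DNC function, not merely some distinguished branch.

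Lemma~\ref{lemma} remains valid when its definitions of ``sibling immediately below'' and of $n$-badness are reinterpreted in this restricted universe. Its proof is a pigeon-hole argument on selections of $\mathfrak W$ and counts of siblings; the specific value $\rho(|\rho^{-}|)$ at any coordinate plays no role other than ensuring that enough siblings are available. Since we remove at most one value at each coordinate and the complement in $\mathbb N$ is still infinite, $m$ bad siblings and $y$ distinct children can still be chosen wherever the original argument required them. Both Case~1 and Case~2 of the verification from Theorem~\ref{mainResult} then transfer verbatim, so every branch of the modified tree still fails to compute an effectively bi-immune set.

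The complexity jump from $\Sigma_1^0$ to $\Delta_2^0$ is precisely the cost of querying the diagonal values: one may view the construction as computable in $\emptyset'$, or equivalently implement it using stage-$s$ approximations to $\phi_e(e)$, retroactively pruning any node whose chosen coordinate is later revealed to coincide with a diagonal value. Since each $\phi_e(e)$ either diverges or converges exactly once to a single value, any fixed coordinate is pruned at most once, so the Limit Lemma yields a $\Delta_2^0$ tree. The principal thing to verify is that this delayed pruning does not disturb the $\liminf$ analysis of true outcomes in the priority construction; but because pruning at any coordinate is a one-time bounded event, the eventual roots of each requirement and their true outcomes stabilize exactly as in the original proof, and the same argument shows that every branch computes no effectively bi-immune set.
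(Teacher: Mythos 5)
Your proposal takes a genuinely different and considerably more laborious route than the paper.  The paper's proof does \emph{not} modify the construction of Theorem~\ref{mainResult} at all: it observes that the tree $T$ already built there is full-branching, hence at every node one can delete the single successor whose last coordinate matches the diagonal value $\phi_e(e)$ (where $e$ is the level) and still leave a nonempty infinite subtree $T'$.  Every branch of $T'$ is a branch of $T$, so it inherits the ``computes no effectively bi-immune set'' property with no further argument, and by construction it avoids every converging diagonal value, hence is DNC.  Since each node is removed at most once (when $\phi_{e,s}(e)$ first converges) and $T$ is c.e., $T'$ is $\Delta_2^0$.  There is nothing to re-verify.

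By contrast, you propose rerunning the whole construction under the additional constraint that siblings avoid diagonal values.  This buys you nothing that the pruning does not already give, and it introduces real obligations that your sketch does not discharge.  If you run the modified construction relative to $\emptyset'$, the Recursion Theorem no longer hands you ordinary c.e.\ indices $a, b_0, \ldots, b_{h(a)}$ whose enumerations the construction controls --- you would be controlling sets c.e.\ in $\emptyset'$, but the requirement $R_{\langle h, e\rangle}$ is stated in terms of genuine $W_a$'s, so the diagonalization collapses.  If instead you run it effectively with stage-$s$ approximations to $\phi_e(e)$ and retroactively prune, you have introduced a new source of injury into the priority argument: a late-converging $\phi_e(e)$ can wipe out roots of already-acting requirements, forcing re-selection, and the interaction of this with the $\avoid$ restraint and the $\liminf$ analysis of true outcomes is exactly the kind of thing a priority argument needs to verify explicitly, not wave at with ``because pruning at any coordinate is a one-time bounded event.''  The claim that Lemma~\ref{lemma} ``remains valid when \ldots\ reinterpreted in this restricted universe'' is likewise plausible but unproved --- the definition of $n$-badness and the counting of siblings both need to be redone under the restriction.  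None of this is necessary: the insight to look for here is that the Theorem~\ref{mainResult} tree has \emph{too many} DNC branches, so pruning down to them after the fact is free.
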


\begin{proof}
The claim follows by pruning the tree constructed in the proof of Theorem \ref{mainResult}.  At each stage $s$ of the construction, if $\phi_{e,s}(e)$ has converged, then we remove all nodes in the tree that take the value $\phi_{e,s}(e)$ on input $e$.  The tree is limit computable since a node removed from the tree will never be a member of the tree again.  Each of the remaining branches is a DNC function that computes no effectively bi-immune set.
\end{proof}

Without proof, we state a theorem due to Ku\v{c}era, which we require to prove the next corollary.

\begin{Theorem}[Ku\v{c}era, \cite{kucera}]
Every $\Delta_2^0$ DNC degree computes a promptly simple set.
\end{Theorem}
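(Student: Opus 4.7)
The plan is to construct, from the given $\Delta_2^0$ DNC function $f$ with computable approximation $\{f_s\}_{s\in\mathbb N}$, a c.e. set $A \leq_T f$ that is promptly simple. Coinfiniteness of $\overline{A}$ will be enforced by a threshold condition (only elements $x > 2e$ are candidates for requirement $R_e$), and the prompt simplicity witness will be a simple computable delay such as $p(s)=s+1$.

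The construction proceeds by a priority argument on the requirements $R_e$: if $W_e$ is infinite, then some $x > 2e$ that enters $W_e$ at stage $s$ also enters $A$ by stage $p(s)$. The engine is the Recursion Theorem combined with the DNC property. For each $e$ I would obtain in advance an index $n_e$ such that $\phi_{n_e}$ searches for the first $x > 2e$ to appear in $W_{e,s}$ and, on success, outputs the current value $f_s(n_e)$. The DNC condition then forces $f(n_e) \neq f_s(n_e)$, so the approximation $\{f_t(n_e)\}_t$ is guaranteed to change at some later stage; that change is the computable signal used to commit the enumeration of $x$ into $A$.

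To obtain $A \leq_T f$, given input $n$, only finitely many requirements (those with $2e < n$) can affect membership, and for each such $e$ the reduction uses the stable value $f(n_e)$ to locate the stage at which the signal fires, thereby emulating the enumeration decision. The $\Delta_2^0$ hypothesis on $f$ is exactly what makes the stable values accessible in the limit, rendering this emulation an honest Turing reduction.

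The main obstacle is the $A \leq_T f$ reduction, since naively the question ``is $n$ the first $x > 2e$ enumerated into $W_e$?'' is not computable. The key insight is that the recursion-theoretic choice of $n_e$, together with the DNC property, converts information from $f$ at the predictable indices $n_e$ into decisive ``before/after'' comparisons that emulate the construction's state, while careful bookkeeping of priorities handles the finitely many competing requirements below any given $n$. Once this reduction is in place, prompt simplicity is immediate from the built-in one-stage enumeration delay.
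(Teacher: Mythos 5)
The paper states Ku\v{c}era's theorem \emph{without proof}, deferring to \cite{kucera}, so there is no in-paper argument to compare against; I am comparing your sketch to the standard argument. Your overall plan --- Recursion-Theorem indices whose diagonal value is defined to be the current approximation $f_s(n_e)$, so that the DNC condition forces an approximation change that serves as permission --- is the right \emph{ingredient}, but two of the load-bearing steps do not work as you describe them.

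First, the claimed delay function $p(s)=s+1$ is incompatible with your own mechanism. You enumerate $x$ into $A$ only at the later stage $t>s$ at which $f_t(n_e)$ first differs from $f_s(n_e)$. That $t$ is not bounded by $s+1$; it depends on how long the $\Delta^0_2$ approximation takes to move. A computable delay \emph{can} be extracted (take $p(s)$ to be the maximum, over the finitely many indices whose $\phi_{n}(n)$ has converged by stage $s$, of the least $t$ with $f_t(n)\neq\phi_n(n)$; this search terminates precisely because of DNC), but $s+1$ is simply wrong, and the fix is not a cosmetic one --- it is the place where DNC is actually used to certify promptness.

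Second, and more seriously, the reduction $A\le_T f$ is not established by your scheme, and I do not believe a single index $n_e$ per requirement suffices. The information $f$ provides at $n_e$ is only the inequality $f(n_e)\neq\phi_{n_e}(n_e)$ \emph{in the case that $\phi_{n_e}(n_e)$ converges}. When $W_e$ has no element $>2e$, $\phi_{n_e}(n_e)$ diverges and $f(n_e)$ carries no information whatsoever. Your proposed reduction ``uses the stable value $f(n_e)$ to locate the stage at which the signal fires,'' but the approximation $f_s(n_e)$ can pass through the true value $f(n_e)$ many times before (or without) $\phi_{n_e}(n_e)$ ever being defined, so finding a stage where $f_s(n_e)=f(n_e)$ does not certify that the construction's commitment for $R_e$ has been seen. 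In short, $f$ cannot distinguish ``$\phi_{n_e}(n_e)$ converged and the change already happened'' from ``$\phi_{n_e}(n_e)$ will converge later'' from ``$\phi_{n_e}(n_e)$ never converges.'' The standard Ku\v{c}era-style proof avoids this by using an infinite, uniformly computable family of indices per requirement (via $s$-$m$-$n$ together with the Recursion Theorem) and by building, simultaneously with $A$, a Turing functional $\Gamma$ with a movable use on $f$ so that $\Gamma^f=A$ is maintained as an explicit invariant of the construction; the reduction then reads off $A$ from $f$ by following the functional, not by trying to detect convergence of a single diagonal computation. That construction of $\Gamma$ is the genuine content of the theorem and is missing from your sketch.
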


\begin{Corollary}
There is a DNC function that computes no effectively bi-immune set and bounds a promptly simple set.
\end{Corollary}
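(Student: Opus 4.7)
The plan is to combine Corollary \ref{delta2} with Ku\v{c}era's theorem in the obvious way. First I would invoke Corollary \ref{delta2} to obtain a $\Delta_2^0$ tree $T$ whose infinite branches are DNC functions that compute no effectively bi-immune set. Because the underlying tree from Theorem \ref{mainResult} has exactly two siblings immediately below every node, the pruning in the proof of Corollary \ref{delta2} removes at most one child of any given node, so $T$ remains infinite and is finitely branching; its leftmost infinite branch $f$ is therefore well-defined and $\Delta_2^0$, extracted in the standard way from a $\Delta_2^0$ index for $T$ by asking at each level the $\Delta_2^0$ question of whether a candidate node has arbitrarily long extensions in $T$ and then taking the smallest surviving child.

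Once $f$ is fixed, the remaining steps are immediate. By Corollary \ref{delta2}, $f$ is a DNC function that computes no effectively bi-immune set. Since the Turing degree of $f$ is a $\Delta_2^0$ DNC degree, Ku\v{c}era's theorem produces a promptly simple set $A$ with $A \leq_T f$, and this $A$ is the promptly simple set bounded by $f$.

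No substantial obstacle arises: the hard work has been done in Theorem \ref{mainResult}, Corollary \ref{delta2}, and Ku\v{c}era's theorem. The only bookkeeping point is the extraction of a $\Delta_2^0$ infinite branch from the $\Delta_2^0$ tree $T$, which is a standard application of the fact that a $\Delta_2^0$ infinite finitely branching tree has a $\Delta_2^0$ branch.
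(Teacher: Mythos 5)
Your proposal is correct and follows essentially the same route as the paper: invoke Corollary \ref{delta2}, note that the $\Delta_2^0$ finitely branching tree has a $\Delta_2^0$ infinite branch, and then apply Ku\v{c}era's theorem to that branch. You simply spell out the branch-extraction and the ``pruning leaves at least one child'' observation that the paper compresses into the single sentence ``A pruned $\Delta_2^0$ tree has a $\Delta_2^0$ member.''
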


\begin{proof}
A pruned $\Delta_2^0$ tree has a $\Delta_2^0$ member.  Thus, by Corollary \ref{delta2}, there is a $\Delta_2^0$ DNC function that computes no effectively bi-immune set.  By Ku\v{c}era's Theorem, such a DNC function also bounds a promptly simple set, proving the~claim.
\end{proof}

Finally, as a counterpoint to Theorem \ref{mainResult}, observe that there is a DNC function that does compute an effectively bi-immune set.  We fix an effectively bi-immune set, $A = \{ a_0 < a_1 < a_2 < \ldots \}$, and define a DNC function, $f$, by
$$f(x) = \begin{cases}
       \langle a_0,0 \rangle & \mbox{if }\phi_x(x) \neq \langle a_0,0 \rangle,\\
       \langle a_0,1 \rangle & \mbox{otherwise.}
\end{cases}$$
Clearly, $f$ computes $A$.

\bibliographystyle{plain}

\end{document}